\newtheorem{theorem}{Theorem}
\newtheorem{lemma}{Lemma}
\newtheorem{proposition}{Proposition}
\newtheorem{corollary}{Corollary}
\theoremstyle{definition}
\newtheorem{remark}{Remark}
\newcommand{\intz}{\mathbb{Z}}
\newcommand{\natn}{\mathbb{N}}
\newcommand{\rear}{\mathbb{R}}
\title{An uncountable subring of $\mathbb{R}$ with Hausdorff dimension zero}
\author{Stephan Baier}
\address{Stephan Baier,
Department of Mathematics, Ramakrishna Mission Vivekananda Educational and Research Institute, PO Belur Math, Howrah, West Bengal 711202, India}
\email{stephanbaier2017@gmail.com}
\author{Shameek Paul}
\address{Shameek Paul ,
Department of Mathematics, Ramakrishna Mission Vivekananda Educational and Research Institute, PO Belur Math, Howrah, West Bengal 711202, India}
\email{shameek.rkmvu@gmail.com}
\begin{document}

\begin{abstract}
We construct a subring as mentioned in the title (hence this subring has Lebesgue measure zero).
\end{abstract}

\maketitle

\section{Preliminaries}

For a subset $B\subseteq \mathbb R$ we let $d(B)$ denote the diameter of $B$. Given a countable collection of subsets $\mathcal C\subseteq \mathcal P(\mathbb R)$ and $s\geq 0$ we define
\[d(s,\mathcal C):=\sum_{B\in \mathcal C} d(B)^s.\]
We note that $d(s,\mathcal C)\in [0,\infty]$ and does not depend on the ordering of the subsets in the collection $\mathcal C$. For $\delta>0$, we say that a collection of subsets $\mathcal C$ is a {\it $\delta$-cover} of a subset $F\subseteq \mathbb R$ if for every $B\in \mathcal C$ we have $d(B)\leq \delta$ and if $F\subseteq \cup\, \mathcal C$.

Given a subset $F\subseteq\mathbb R$ and $s,\delta\in\mathbb R$ with $s\geq 0$, and $\delta >0$, we define
\[\mathcal H^s_\delta(F)=\inf\big\{d(s,\mathcal C):\mathcal C\mbox{ is a countable $\delta$-cover of $F$}\big\}.\]
If $\delta_1<\delta_2$, then we see that $\mathcal H^s_{\delta_1}(F)\geq \mathcal H^s_{\delta_2}(F)$. The {\it $s$-dimensional Hausdorff measure $\mathcal H^s(F)$} is defined to be the limit of $\mathcal H_\delta^s(F)$ as $\delta\to 0^+$.

It can be shown (see \cite{F2}) that there exists $\alpha\in [0,1]$ such that for every $s\in [0,\alpha)$ we have $\mathcal H^s(F)=\infty$, and for every $s >\alpha$ we have $\mathcal H^s(F)=0$. The number $\alpha$ is called the {\it Hausdorff dimension} of $F$ and is denoted by $\dim_\mathcal H F$. If $F=\cup\,\mathcal C$ where $\mathcal C$ is a countable collection of subsets of $\mathbb R$, then $\dim_\mathcal H F=\sup\big\{\dim_\mathcal H A:A\in\mathcal C\big\}$.

\section{Construction}

We give an explicit construction of a proper subring of $\mathbb R$ which is a Borel set. We also show explicitly that it has Hausdorff dimension zero.

We let $\mathbb N_0$ denote the set of non-negative integers, $\mathbb N$ denote the set of positive integers, and $\mathbb Z$ denote the set of integers. For every $a,b\in \mathbb Z$ we let $[a,b]_\mathbb Z=\{k\in \mathbb Z:a\leq k\leq b\}$. Let
$$
S:=\{0,1,2,2^2,2^3,\ldots\}=\{0\}\cup \{2^n : n\in \natn_0\}.
$$
For every $n\in\mathbb N$ we let the {\it $n^\text{th}$ sumset of $S$} be defined as
\[
nS:=\{s_1+\cdots +s_n: s_1,\ldots,s_n\in S\}.
\]
For every $n\in \mathbb{N}$ we define
\[
A_n:=\left\{\sum\limits_{k\in nS} \frac{x_k}{2^k}:\mbox{ there exists }t\in \mathbb N \mbox{ such that }x_k\in [-t,t\,]_\mathbb Z\mbox{ for every }k\in nS\right\}.
\]

\begin{theorem}\label{group}
Let $n\in\natn$. The set $A_n$ is an uncountable subgroup of $\rear$.
\end{theorem}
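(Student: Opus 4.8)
The plan is to dispose of the subgroup criterion first---it is almost immediate once one notes that every series defining an element of $A_n$ converges absolutely---and then to prove uncountability by embedding a Cantor-type set into $A_n$.

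I would begin by recording that the index set $nS$ is infinite ($S\subseteq nS$, since each $s\in S$ equals $s+0+\cdots+0$ with $0\in S$), so $\sum_{k\in nS}x_k/2^k$ is a genuinely infinite series; however, because $nS\subseteq\natn_0$ and the defining condition supplies one common $t$ with $|x_k|\le t$ for all $k\in nS$, the series is dominated by $t\sum_{k\ge 0}2^{-k}=2t<\infty$ and thus converges absolutely to a real number. Hence $A_n\subseteq\rear$ is well defined and nonempty. For the subgroup conditions: $0\in A_n$ (all $x_k=0$, $t=1$); if $\alpha=\sum_{k\in nS}x_k/2^k\in A_n$ with bound $t_1$, then $-\alpha=\sum_{k\in nS}(-x_k)/2^k\in A_n$ with the same bound; and if also $\beta=\sum_{k\in nS}y_k/2^k\in A_n$ with bound $t_2$, then absolute convergence permits termwise addition, giving $\alpha+\beta=\sum_{k\in nS}(x_k+y_k)/2^k$ with $x_k+y_k\in[-(t_1+t_2),t_1+t_2]_\intz$, so $\alpha+\beta\in A_n$ with bound $t_1+t_2$. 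Thus $A_n$ is a subgroup of $(\rear,+)$. The non-uniqueness of the representing series is harmless here: from given admissible representations of the inputs we only need to exhibit \emph{one} admissible representation of the output.

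For uncountability I would construct an injection $\{0,1\}^{\natn}\hookrightarrow A_n$. Since $2^m\in S\subseteq nS$ for every $m\in\natn$ and these points are pairwise distinct, the map
\[
\Phi\colon(\varepsilon_m)_{m\ge 1}\longmapsto\sum_{m=1}^{\infty}\frac{\varepsilon_m}{2^{2^m}}
\]
takes values in $A_n$ (set $x_k=\varepsilon_m$ when $k=2^m$, $x_k=0$ otherwise, and $t=1$). If $(\varepsilon_m)\ne(\varepsilon_m')$, let $M$ be the least index where they differ; then the reverse triangle inequality gives
\[
\bigl|\Phi(\varepsilon)-\Phi(\varepsilon')\bigr|\ \ge\ \frac{1}{2^{2^M}}-\sum_{m>M}\frac{1}{2^{2^m}}\ \ge\ \frac{1}{2^{2^M}}-\sum_{j\ge 2^{M+1}}\frac{1}{2^{j}}\ =\ \frac{1}{2^{2^M}}-\frac{2}{2^{2^{M+1}}}\ >\ 0,
\]
the strict inequality holding because $2^{M+1}>2^{M}+1$ for $M\ge 1$. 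Hence $\Phi$ is injective and $|A_n|\ge 2^{\aleph_0}$, so $A_n$ is uncountable.

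This statement is not really hard; the only points needing a little care are that the defining series converges precisely because all the $x_k$ obey a single bound $t$, and that uncountability must be extracted from an honestly injective map into $A_n$ rather than from the mere abundance of admissible coefficient tuples (the tuple-to-real assignment being far from injective in general). The lacunary exponents $2^m$ are chosen exactly so that injectivity follows from the elementary geometric estimate above---any sufficiently sparse subset of $nS$ would serve equally well.
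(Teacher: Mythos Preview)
Your proof is correct and follows essentially the same approach as the paper: the subgroup verification is identical (termwise negation and addition with the combined bound), and uncountability is obtained by injecting $\{0,1\}$-sequences into $A_n$ via coefficients supported on $nS$. The only cosmetic difference is that the paper uses the full index set $nS$ and invokes uniqueness of binary expansions not ending in all ones, whereas you restrict to the sparser subset $\{2^m:m\ge 1\}$ and verify injectivity by hand with a tail estimate---but this is the same idea executed slightly differently.
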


\begin{proof}
We first show that $A_n$ is a subgroup of $\rear$. Trivially, we have $0\in A_n$, and that $-x\in A_n$ whenever $x\in A_n$. Thus, it suffices to show that $A_n$ is closed under addition.  Let $x,y\in A_n$. Therefore, there exists $t\in \mathbb N$ and for every $k\in nS$ there exist $x_k,y_k\in [-t,t\,]_\mathbb Z$ such that
\[
x=\sum\limits_{k\in nS} \frac{x_k}{2^k} \quad \mbox{and} \quad y=\sum\limits_{k\in nS} \frac{y_k}{2^k}.
\]
It follows that
$$
x+y=\sum\limits_{k\in nS} \frac{x_k+y_k}{2^k}
$$
is also an element of $A_n$ since $x_k+y_k\in [-2t,2t\,]_\mathbb Z$ for every $k\in nS$. Also, the set $A_n$ is uncountable, because by the uniqueness of binary expansions which do not end with an infinite string of ones, the set of real numbers of the form
$$
\sum\limits_{k\in nS} \frac{z_k}{2^k} \quad \mbox{with } z_k\in \{0,1\}\mbox{ for every }k\in nS
$$
is an uncountable subset of $A_n$.
\end{proof}

We now proceed to show that the set $A_n$ has Hausdorff dimension zero for every $n\in\natn$. For every $n\in\natn$ write
\[
A_n:=\bigcup_{t=1}^{\infty} A_{n,t}
\]
where for every $t\in\natn$ we let
$$
A_{n,t}:=\left\{\sum\limits_{k\in nS} \frac{x_k}{2^k}\in \mathbb{R} :\mbox{ for every } k\in nS\mbox{ we have }x_k\in [-t,t\,]_\mathbb{Z}\right\}.
$$

For every $n,\ell\in\mathbb N$ we let $(nS)_{\leq \ell}=\{k\in nS:k\leq \ell\}$ and we let $(nS)_{>\ell}=\{k\in nS:k>\ell\}$. We will need the following result to show that the set $A_{n,t}$ has Hausdorff dimension zero for every $n,t\in\natn$.

\begin{lemma} \label{density}
For every $n,\ell\in \mathbb N$ we define
$$
g_n(\ell):=\emph{\texttt{\#}}(nS)_{\le \ell}.
$$
Then we have the bound
$$
g_n(\ell) \le (2+\log_2 \ell)^n.
$$
\end{lemma}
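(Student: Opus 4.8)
The plan is to bound $g_n(\ell)$ by a crude counting argument: count the tuples $(s_1,\dots,s_n)\in S^n$ that can possibly produce an element of $(nS)_{\le\ell}$, and observe that the summation map from tuples to their sums is surjective onto $(nS)_{\le\ell}$. So the first step is the key reduction: since every element of $S$ is non-negative, any representation $s_1+\cdots+s_n$ of an element of $(nS)_{\le\ell}$ must have each $s_i\le\ell$. Hence every element of $(nS)_{\le\ell}$ is a sum of $n$ terms drawn from the finite set $S_{\le\ell}:=\{s\in S:s\le\ell\}$, and therefore $g_n(\ell)\le(\#S_{\le\ell})^n$.

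The second step is to count $\#S_{\le\ell}$. By definition $S_{\le\ell}$ consists of $0$ together with those powers $2^m$ (for $m\in\natn_0$) satisfying $2^m\le\ell$, i.e.\ satisfying $0\le m\le\log_2\ell$; here we use $\ell\ge 1$ since $\ell\in\natn$. There are exactly $\lfloor\log_2\ell\rfloor+1$ such powers, so $\#S_{\le\ell}=\lfloor\log_2\ell\rfloor+2$. Combining this with the first step gives $g_n(\ell)\le(\lfloor\log_2\ell\rfloor+2)^n\le(2+\log_2\ell)^n$, which is exactly the claimed bound.

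I expect no genuine obstacle here; this is a routine estimate. The only points requiring a little care are the reduction in the first step (which relies on non-negativity of the elements of $S$, so that a bound on the sum forces a bound on each summand) and the off-by-one bookkeeping in the second step, where one must remember to include the element $0\in S$ alongside the $\lfloor\log_2\ell\rfloor+1$ admissible powers of $2$.
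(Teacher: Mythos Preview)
Your proof is correct and follows essentially the same approach as the paper: the paper records the two observations $g_n(\ell)\le g_1(\ell)^n$ and $g_1(\ell)\le 2+\log_2\ell$, which are precisely your first and second steps (since $g_1(\ell)=\#S_{\le\ell}$). You have simply made explicit the non-negativity justification for the first inequality and the off-by-one bookkeeping for the second, which the paper leaves to the reader.
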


\begin{proof}
We observe that for every $z\in\mathbb N$, we have
{
\[
g_n(\ell)\le g_1(\ell)^n\mbox{ and }g_1(\ell)\leq 2+\log_2 \ell.
\]
}
The result follows.
\end{proof}

\begin{theorem}\label{dimension}
The set $A_{n,t}$ has Hausdorff dimension zero for all $n,t\in \mathbb{N}$.
\end{theorem}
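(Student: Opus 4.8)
The plan is to show that $\mathcal H^s(A_{n,t}) = 0$ for every fixed $s > 0$; since $\dim_{\mathcal H} A_{n,t}$ is the infimum of the exponents $s$ for which $\mathcal H^s(A_{n,t}) = 0$, this will immediately give $\dim_{\mathcal H} A_{n,t} = 0$. The covers will come from truncating the defining series at a level $\ell$. Fix $s > 0$ and $\ell \in \mathbb N$, and for $x = \sum_{k \in nS} x_k / 2^k \in A_{n,t}$ (with all $x_k \in [-t,t]_{\mathbb Z}$) split the sum into its \emph{head} $\sum_{k \in (nS)_{\le \ell}} x_k / 2^k$, which depends only on the finitely many coordinates indexed by $(nS)_{\le \ell}$, and its \emph{tail} $\sum_{k \in (nS)_{> \ell}} x_k / 2^k$, whose absolute value is at most $t \sum_{k > \ell} 2^{-k} = t \cdot 2^{-\ell}$. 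Hence every $x \in A_{n,t}$ lies in the closed interval of length $t \cdot 2^{1-\ell}$ centred at its head.

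First I would form the collection $\mathcal C_\ell$ consisting of, for each tuple $(x_k)_{k \in (nS)_{\le \ell}}$ with entries in $[-t,t]_{\mathbb Z}$, the closed interval of length $t \cdot 2^{1-\ell}$ centred at $\sum_{k \in (nS)_{\le \ell}} x_k / 2^k$. By the previous paragraph $\mathcal C_\ell$ covers $A_{n,t}$, each of its members has diameter $\delta_\ell := t \cdot 2^{1-\ell}$, so $\mathcal C_\ell$ is a $\delta_\ell$-cover of $A_{n,t}$, and $\# \mathcal C_\ell \le (2t+1)^{g_n(\ell)}$, where $g_n(\ell) = \#(nS)_{\le \ell}$ is the counting function from Lemma~\ref{density}.

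Next I would estimate, straight from the definitions,
\[
\mathcal H^s_{\delta_\ell}(A_{n,t}) \le d(s, \mathcal C_\ell) \le (2t+1)^{g_n(\ell)} \cdot (t \cdot 2^{1-\ell})^s ,
\]
and then apply Lemma~\ref{density} to bound $g_n(\ell) \le (2 + \log_2 \ell)^n$. Taking logarithms this gives
\[
\log d(s, \mathcal C_\ell) \le (2 + \log_2 \ell)^n \log(2t+1) + s \log(2t) - s \ell \log 2 .
\]
Since $(2 + \log_2 \ell)^n$ grows only like a power of $\log \ell$, the right-hand side is dominated by the linear term $- s \ell \log 2$ and tends to $-\infty$ as $\ell \to \infty$; hence $d(s, \mathcal C_\ell) \to 0$. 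As $\delta_\ell \to 0$ as well, letting $\ell \to \infty$ in the first displayed inequality yields $\mathcal H^s(A_{n,t}) = 0$, which is what we wanted.

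I expect the one step needing genuine care to be this final comparison of growth rates: one has to be sure that the merely polylogarithmic size of $(nS)_{\le \ell}$ furnished by Lemma~\ref{density} really is enough to overcome the exponential blow-up $(2t+1)^{g_n(\ell)}$ in the number of covering intervals when it is set against the geometric saving $2^{-s\ell}$ coming from their shrinking length. The remaining ingredients — the bound on the tail of the series, the cardinality count for $\mathcal C_\ell$, and the implication that a $\delta$-cover of arbitrarily small $d(s,\cdot)$-weight forces $\mathcal H^s = 0$ — are routine.
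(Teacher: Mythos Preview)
Your proposal is correct and follows essentially the same approach as the paper: split each $x\in A_{n,t}$ into a head over $(nS)_{\le\ell}$ and a tail bounded by $t\cdot 2^{-\ell}$, cover $A_{n,t}$ by at most $(2t+1)^{g_n(\ell)}$ intervals of length $2t\cdot 2^{-\ell}$, and then use Lemma~\ref{density} to see that the polylogarithmic exponent $g_n(\ell)\le(2+\log_2\ell)^n$ is beaten by the linear term $s\ell$ after taking logarithms. The only cosmetic difference is that the paper restricts to $s\in(0,1]$ and writes the final limit comparison slightly differently, but the argument is the same.
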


\begin{proof}
Let $x\in A_{n,t}$. Then for every $k\in nS$ there exist $a_k\in [-t,t\,]_\mathbb Z$ such that
$$
x=\sum\limits_{k\in nS} \frac{a_k}{2^k}.
$$
Pick $\ell\in \mathbb N$ and write $x$ as
$$
x=y+z,
$$
where
$$
y:=\sum\limits_{k\,\in\, (nS)_{\leq \ell}}  \frac{a_k}{2^k} \quad \mbox{and} \quad
z:=\sum\limits_{k\,\in\, (nS)_{> \ell}}  \frac{a_k}{2^k}.
$$
Then using the triangle inequality, we have the bound
$$
|z|\le t\sum\limits_{k\,\in\, (nS)_{> \ell}}  \frac{1}{2^k}\le t\sum\limits_{k=\ell+1}^{\infty} \frac{1}{2^k}=\frac{t}{2^\ell}.
$$
Let
$$
B:=\left\{\sum\limits_{k\,\in\, (nS)_{\leq \ell}}  \frac{a_k}{2^k}: \mbox{ for every }k\in (nS)_{\leq \ell}\mbox{ we have }a_k\in [-t,t\,]_\mathbb Z\right\}
$$
and let $r=t/2^\ell$. It follows that
$$
A_{n,t}\subseteq \bigcup_{b\in B} I_b\mbox{ where }I_b=[b-r,b+r]\mbox{ for every }b\in B.
$$
Let $c_\ell=\emph{\texttt{\#}}\,(nS)_{\leq \ell}$.
Then $\emph{\texttt{\#}}B\leq (2t+1)^{c_\ell}$ and for every $b\in B$ we see that $\mu(I_b)=2r$ where $\mu$ is the Lebesgue measure on $\mathbb R$. Let $s\in (0,1]$. We see that
$$
\mathcal{H}_{2r_\ell}^s(A_{n,t}) \le \sum\limits_{b\in B} \mu(I_b)^s=\emph{\texttt{\#}}B\,(2r)^s\leq (2t+1)^{c_\ell}\frac{t^s}{2^{(\ell-1)s}}.
$$
We claim that
\begin{equation} \label{limit}
\lim\limits_{\ell\rightarrow \infty} (2t+1)^{c_\ell}\frac{t^s}{2^{(\ell-1)s}}=0.
\end{equation}
Since $r\to 0$ as $\ell\to \infty$, this implies that $\mathcal H^s(A_{n,t})=0$. As this holds for all $s\in (0,1]$, we conclude that the Hausdorff dimension of $A_{n,t}$ is zero.

Equation \eqref{limit} above is seen as follows. First, it is convenient to note that \eqref{limit} holds if
$$
\lim\limits_{\ell\to\infty} \frac{(3t)^{c_\ell}}{2^{\ell s}}=0.
$$
Now, by taking logarithms, we see that \eqref{limit} holds if
\[
\lim\limits_{\ell\to \infty} \Big(\ell s-c_\ell\log_2(3t)\Big)=\infty.
\]
Let $\alpha=\frac{\log_2(3t)}{s}$. As $s>0$, we see that \eqref{limit} holds if
\begin{equation} \label{inf}
\lim\limits_{\ell\to \infty} \Big(\ell-\alpha c_\ell\Big)=\infty.
\end{equation}
Now using Lemma \ref{density}, we have
$$
c_\ell \le (2+\log_2 \ell)^n,
$$
which implies
$$
\ell\ge 2^{c_\ell^r-2}
$$
where $r=1/n$. Hence, it follows that \eqref{inf} holds. This justifies our claim.
\end{proof}

Let $n\in\natn$. From Theorems \ref{group} and \ref{dimension} it follows that the set $A_n$ is an uncountable subgroup of $\rear$ of Hausdorff dimension zero. As $0\in S$, we have $nS\subseteq (n+1)S$, and hence $A_n\subseteq A_{n+1}$. Let
\[
A:=\bigcup_{n=1}^{\infty} A_n.
\]
We deduce that the set $A$ is an uncountable subgroup of $\rear$ of Hausdorff dimension zero. We aim to establish that $A$ is in fact a subring. To this end, we need the following result.

\begin{lemma} \label{representations}
Let $n\in \mathbb{N}$ and let $c_n(k)$ be the number of representations of an element $k\in \natn$ as a sum $k=s_1+s_2+\cdots +s_n$ of $n$ elements of $S$. Then there exists $b_n\in \mathbb N$ such that $c_n(k)\leq b_n$ for every $k\in \natn$.
\end{lemma}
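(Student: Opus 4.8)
The plan is to reduce the statement to a uniform bound on the number of partitions of $k$ into few powers of $2$, and then prove that bound by a short induction on $n$.

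First I would pass from ordered to unordered representations. Since $0\in S$, deleting the zero entries from a representation $k=s_1+\cdots+s_n$ identifies it, after forgetting the order, with a partition of $k$ into at most $n$ powers of $2$ (that is, into elements of $S\setminus\{0\}$). Writing $P(k,n)$ for the number of such partitions, the map ``forget the order and delete the zeros'' sends the $c_n(k)$ representations onto the set counted by $P(k,n)$, and the fibre over a given partition has at most $n!$ elements (they are the orderings of a multiset of size $n$). Hence $c_n(k)\le n!\,P(k,n)$, and it suffices to show that $\beta(n):=\sup_{k\ge 0}P(k,n)$ is finite for every $n$.

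I would prove $\beta(n)<\infty$ by induction on $n$, starting from $\beta(0)=1$. Fix $k\ge 1$ and consider a partition of $k$ into $m\le n$ powers of $2$. Its largest part $2^{a}$ satisfies $2^{a}\le k$, since a part cannot exceed the whole sum, and $k\le m\,2^{a}\le n\,2^{a}$, since every part is at most $2^{a}$; hence $a$ ranges over an interval of length $\log_2 n$ and takes at most $2+\log_2 n$ integer values. Let $c\in\{1,\dots,n\}$ be the multiplicity of $2^{a}$. Once $a$ and $c$ are fixed, the remaining parts form a partition of $k-c\,2^{a}\ (\ge 0)$ into at most $n-c\le n-1$ powers of $2$; dropping the (irrelevant for an upper bound) constraint that these parts be $<2^{a}$, there are at most $P(k-c\,2^{a},\,n-c)\le\beta(n-1)$ of them, using the inductive hypothesis together with the obvious monotonicity $\beta(0)\le\beta(1)\le\cdots$. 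Summing over the at most $(2+\log_2 n)\,n$ choices of $(a,c)$ gives $P(k,n)\le (2+\log_2 n)\,n\,\beta(n-1)$, so $\beta(n)\le\max\{1,\,(2+\log_2 n)\,n\,\beta(n-1)\}<\infty$. Taking $b_n:=n!\,\beta(n)$ then finishes the proof.

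I do not anticipate a serious obstacle; the only points needing a little care are the passage from ordered tuples to partitions (and the harmless factor $n!$ it costs) and the observation that stripping off the largest part together with its full multiplicity lowers $n$ by at least one, which is exactly what lets the induction close. An essentially equivalent route would be to note that every partition of $k$ into powers of $2$ is obtained from the binary expansion of $k$ (which has at most $n$ terms) by at most $n$ ``splits'' $2^{j}\mapsto 2^{j-1}+2^{j-1}$, and to bound the number of multisets so produced; but the induction on $n$ above seems the most economical.
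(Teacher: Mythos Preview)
Your argument is correct and uses the same key idea as the paper: some summand must be at least $k/n$, hence a power of $2$ confined to a window of length $\log_2 n$, and removing it allows induction on $n$. The only difference is packaging---the paper inducts directly on ordered tuples (getting $c_n(k)\le n(1+\log_2 n)\,b_{n-1}$), whereas you first pass to unordered partitions at the cost of a harmless factor $n!$ and strip off the largest part with its full multiplicity---but the core step is identical.
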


\begin{proof}
Clearly $c_1(k)\le 1$ for every $k\in\natn$ and hence, we may take $b_1:=1$. Now assume that $n\ge 2$ and that there exists $b_{n-1}\in\mathbb N$ such that $c_{n-1}(k)\le b_{n-1}$ for every $k\in \mathbb{N}$. Let $k\in \natn$ and suppose
$$
k=s_1+\cdots +s_n
$$
is a representation of $k$ as a sum of $n$ elements of $S$. Then for at least one $i\in\{1,\ldots,n\}$ we have $s_i\ge k/n$ and hence $s_i\neq 0$. So there exists $m\in \natn_0$ such that $s_i=2^m$. As $k/n\le s_i\le k$, we observe that
$\log_2 k-\log_2 n\le m\le \log_2 k$. Let
\[I_{n,k}:=[\,\log_2 k-\log_2 n,\, \log_2 k\,]\cap \natn_0.\]
Then we see that
$$
c_n(k)\le n\sum\limits_{m\in I_{n,k}} c_{n-1}(k-2^m) \le nb_{n-1}\emph{\texttt{\#}}I_{n,k}\le nb_{n-1}(1+\log_2 n).
$$
So we are done by taking $b_n\in \mathbb N$ such that $b_n\geq nb_{n-1}(1+\log_2 n)$.
\end{proof}

\begin{theorem} \label{main}
The set $A$ is a a subring of $\mathbb{R}$.
\end{theorem}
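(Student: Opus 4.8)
The plan is to use that, by the discussion preceding the statement, $A$ is already an uncountable subgroup of $\rear$, so that the only thing left to prove is closure under multiplication (note also $1=1/2^0\in A_1\subseteq A$ since $0\in 1S$). So I would start with arbitrary $x,y\in A$ and reduce to a convenient normal form. Since $nS\subseteq(n+1)S$ gives $A_n\subseteq A_{n+1}$, and since $A_n=\bigcup_{t\in\natn}A_{n,t}$, there are $n,t\in\natn$ with
\[
x=\sum_{j\in nS}\frac{x_j}{2^j},\qquad y=\sum_{k\in nS}\frac{y_k}{2^k},\qquad x_j,\,y_k\in[-t,t]_\intz\ \text{ for all } j,k\in nS .
\]

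Next I would compute the product. Both defining series are absolutely convergent (each is dominated by $t\sum_{m\ge 0}2^{-m}=2t$, using that the elements of $nS$ are distinct non-negative integers), so the product is an absolutely convergent double series and may be regrouped freely:
\[
xy=\sum_{j\in nS}\sum_{k\in nS}\frac{x_jy_k}{2^{j+k}}=\sum_{\ell\in(2n)S}\frac{c_\ell}{2^\ell},\qquad\text{where } c_\ell:=\sum_{\substack{j,k\in nS\\ j+k=\ell}}x_jy_k .
\]
Here I use the elementary identity $nS+nS=(2n)S$ (a sum of $n$ elements of $S$ added to a sum of $n$ elements of $S$ is a sum of $2n$ elements of $S$), so every exponent $j+k$ that occurs lies in $(2n)S$; and for each fixed $\ell$ the inner sum defining $c_\ell$ is finite, so $c_\ell\in\intz$ is well defined.

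The crux is a uniform bound $|c_\ell|\le C$ with $C$ independent of $\ell$. Fix $\ell\in(2n)S$. I would bound the number of pairs $(j,k)$ with $j,k\in nS$ and $j+k=\ell$ by assigning to each such pair a representation $\ell=a_1+\cdots+a_n+a_{n+1}+\cdots+a_{2n}$ with all $a_i\in S$, obtained by concatenating a representation of $j$ as a sum of $n$ elements of $S$ with one of $k$; this assignment is injective, since the tuple $(a_1,\dots,a_{2n})$ recovers $j=a_1+\cdots+a_n$ and $k=a_{n+1}+\cdots+a_{2n}$. Hence the number of such pairs is at most $c_{2n}(\ell)\le b_{2n}$ by Lemma~\ref{representations} (for $\ell=0$ the only pair is $(0,0)$, so $b_{2n}\ge 1$ still suffices). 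Since $|x_jy_k|\le t^2$, this gives $|c_\ell|\le t^2 b_{2n}$ for every $\ell\in(2n)S$; in particular the regrouped series converges absolutely, and $c_\ell\in[-t^2b_{2n},\,t^2b_{2n}]_\intz$ for all $\ell\in(2n)S$, so $xy\in A_{2n}\subseteq A$, as required.

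I expect the main obstacle to be precisely this last uniform bound: one needs both the set identity $nS+nS=(2n)S$ and the fact that the number of ``collisions'' $j+k=\ell$ in the Cauchy product is controlled by the representation-counting function $c_{2n}$ — which is exactly what Lemma~\ref{representations} supplies. Everything else (the subgroup property, $1\in A$, and the rearrangement of absolutely convergent series) is routine.
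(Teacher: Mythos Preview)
Your proof is correct and follows essentially the same approach as the paper's: multiply the two series, collect terms over $(m+n)S$ (you normalize to $m=n$, which is fine since $A_m\subseteq A_{m+1}$ and $A_{n,t}\subseteq A_{n,t'}$ for $t\le t'$), and then bound the coefficients uniformly via Lemma~\ref{representations}. Your treatment is in fact slightly more careful than the paper's in that you justify the rearrangement of the double series via absolute convergence and explicitly handle the case $\ell=0$, which Lemma~\ref{representations} does not cover.
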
 

\begin{proof}
It suffices to show that $1\in A$ and that $A$ is closed under multiplication. We see that
$$
1=\frac{1}{2^0}+\sum\limits_{k\in S_{>0}} \frac{0}{2^k}\in A_1.
$$
To establish that $A$ is closed under multiplication, we proceed as follows. Let $x,y\in A$. There exist $m,n\in \mathbb{N}$ such that $x\in A_m$ and $y\in A_n$. Therefore, there exist $t,t'\in \mathbb N$ and for every $k\in nS$ there exist $x_k\in [-t,t\,]_\mathbb Z$ and $y_k\in [-t',t']_\mathbb Z$ such that
\[
x=\sum\limits_{k\in mS} \frac{x_k}{2^k} \quad \mbox{and} \quad y=\sum\limits_{\ell\in nS} \frac{y_\ell}{2^\ell}.
\]
Now
\[
xy=\left(\sum\limits_{k\in mS} \frac{x_k}{2^k}\right) \left(\sum\limits_{\ell\in nS} \frac{y_\ell}{2^\ell}\right)=
\sum\limits_{(k,\ell)\in mS\times nS} \frac{x_ky_\ell}{2^{k+\ell}}=\sum\limits_{r\in (m+n)S} \frac{z_r}{2^r}
\]
where for every $r\in (m+n)S$ we let
\[
z_r:=\sum\limits_{\substack{(k,\ell)\in mS\times nS \smallskip \\ k+\ell=r}}  x_ky_\ell.
\]
The number of representations of an element $r\in (m+n)S$ as a sum $r=k+\ell$ of elements $k\in mS$ and $\ell\in nS$ is bounded by the number of representations of $r$ which are of the form $r=s_1+s_2+\cdots +s_{m+n}$ where $s_1,s_2,\ldots,s_{m+n}\in S$. Hence, by Lemma \ref{representations} we deduce that there exists $b\in\mathbb N$ such that for every $r\in (m+n)S$ we have $|z_r|\leq btt'$. It follows that $xy\in A_{m+n}$. This establishes that $A$ is closed under multiplication.
\end{proof}

We now proceed to show that  the only rational numbers in $A$ are the dyadic rationals. In particular, it will follow that the ring $A$ is not a field.

\begin{lemma}\label{gaps}
Let $b,n\in \mathbb N$. Then there exist two consecutive elements $k,k'\in nS$ such that $k'-k>b$.
\end{lemma}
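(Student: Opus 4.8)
The plan is to identify exactly which integers lie in $nS$ by looking at their binary digit sums, and then to exhibit an explicit run of $b$ consecutive integers that is disjoint from $nS$; this run, together with the fact that $nS$ contains arbitrarily large powers of two, will force a gap of length at least $b+1$.

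Write $s_2(m)$ for the sum of the binary digits of $m\in\natn_0$. The first step is the remark that if $m$ is a sum of $j$ powers of two (repetitions allowed), then $s_2(m)\le j$: if the exponents occurring are not all distinct, replace a repeated pair $2^a+2^a$ by the single power $2^{a+1}$, which lowers the number of summands by one, and iterate until the exponents are distinct, at which point the digit sum equals the number of summands. (Equivalently, this is the subadditivity $s_2(x+y)\le s_2(x)+s_2(y)$ iterated $n-1$ times.) Since every element of $nS$ is a sum of $n$ terms each of which is either $0$ or a power of two, we conclude that $s_2(m)\le n$ for every $m\in nS$. Only this direction is needed.

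For the second step, fix $p\in\natn$ with $2^p\ge b$ and set $N:=n+p+1$. For $0\le i<2^N$ the integer $2^N-1-i$ is obtained from $2^N-1$ (whose binary expansion consists of $N$ ones) by complementing the lowest $N$ bits of $i$, so $s_2(2^N-1-i)=N-s_2(i)$. When $0\le i\le b-1$ we have $i<2^p$ and hence $s_2(i)\le p$, so $s_2(2^N-1-i)\ge N-p=n+1$. By the first step, none of the $b$ consecutive integers
\[
2^N-b,\quad 2^N-b+1,\quad\ldots,\quad 2^N-1
\]
lies in $nS$. Finally, since $2^N\in S\subseteq nS$ while $nS\cap[0,\,2^N-1]$ is nonempty (it contains $0$), we may let $k$ be the largest element of $nS\cap[0,\,2^N-1]$ and put $k':=2^N$; then $k$ and $k'$ are consecutive in $nS$, and because the $b$ integers displayed above are absent from $nS$ we get $k\le 2^N-b-1$, whence $k'-k\ge b+1>b$.

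I do not expect a genuine obstacle here. The only point requiring a little care is the digit-sum bound $s_2(m)\le n$ for $m\in nS$ — equivalently, that a number with more than $n$ ones in binary cannot be written as a sum of $n$ powers of two — and the reduction ``$2^a+2^a\mapsto 2^{a+1}$'' disposes of it immediately; everything else is bookkeeping.
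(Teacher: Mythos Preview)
Your proof is correct, but it follows a different route from the paper's. The paper argues by contradiction from the counting estimate of Lemma~\ref{density}: if every gap in $nS$ were at most $b$, then $\#(nS)_{\le \ell}\ge \ell/b$, contradicting $\#(nS)_{\le \ell}\le (2+\log_2\ell)^n$. Your argument instead identifies (one direction of) the equality $nS=\{m:s_2(m)\le n\}$ and then explicitly manufactures a block of $b$ consecutive integers with digit sum exceeding $n$, namely $2^N-b,\ldots,2^N-1$ for $N=n+p+1$ with $2^p\ge b$. The paper's approach is economical because it recycles a lemma already needed for the Hausdorff-dimension computation; yours is self-contained, constructive, and pinpoints an explicit gap (and, incidentally, reveals the clean description $nS=\{m:s_2(m)\le n\}$, which the paper never states). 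Either argument is perfectly adequate here.
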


\begin{proof}
For every $\ell\in \mathbb N$ we let $c_\ell:=\emph{\texttt{\#}}\,(nS)_{\le \ell}$. If the conclusion is false, then $c_\ell\geq \ell/b$ for every $\ell\in\mathbb N$. This contradicts Lemma \ref{density}.
\end{proof}

\begin{proposition}\label{string}
Let $x\in A$ and $\ell\in\mathbb N$. Then there exists a binary expansion of $x$ which contains either a string of zeroes or a string of ones having length $\ell$.
\end{proposition}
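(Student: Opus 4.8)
The plan is to find, inside some binary expansion of $x$, a long block of identical digits lying in a region of the expansion that is controlled by a long gap of the sumset $nS$ supplied by Lemma~\ref{gaps}. First I would make two reductions. Since $A$ is a subgroup of $\rear$ we may replace $x$ by $-x$, so it is enough to treat $x\geq 0$; and if $x$ is a dyadic rational (in particular if $x=0$) its finite binary expansion, together with trailing zeroes, already contains a string of $\ell$ zeroes. So assume $x>0$ is not dyadic. Choose $n,t\in\natn$ with $x=\sum_{k\in nS}x_k/2^k$ and $x_k\in[-t,t]_\intz$ for all $k\in nS$, and apply Lemma~\ref{gaps} with $b:=\ell+\lceil\log_2 t\rceil$ to obtain consecutive elements $k_0<k_1$ of $nS$ with $k_1-k_0>b$. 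Set $p:=k_0$ and $q:=k_1-1-\lceil\log_2 t\rceil$, so that $q-p\geq\ell$ and $q>p\geq 0$. Then split $x=y+z$ with $y:=\sum_{k\in(nS)_{\leq k_0}}x_k/2^k$ and $z:=\sum_{k\in(nS)_{\geq k_1}}x_k/2^k$ (the second sum exhausts $(nS)_{>k_0}$ because $k_0,k_1$ are consecutive in $nS$). Each term of $y$ lies in $2^{-p}\intz$, so $y=N/2^p$ with $N\in\intz$, while
\[
|z|\;\leq\;t\sum_{k\geq k_1}2^{-k}\;=\;\frac{t}{2^{k_1-1}}\;\leq\;2^{-q}.
\]
As $x$ is not dyadic, neither is $z$, so in fact $0<|z|<2^{-q}$.

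The core of the argument is then to read off a long monotone block of digits from $x=N/2^p+z$, distinguishing the sign of $z$, and in each case producing an explicit binary expansion of $x$ by adding binary expansions of the two summands whose supports of nonzero digits are disjoint, so that no carrying can occur. If $z>0$, then $N/2^p=x-z>-2^{-q}>-2^{-p}$, whence $N\geq 0$; thus $N/2^p$ has a binary expansion vanishing in all positions $>p$, and $0<z<2^{-q}$ has a binary expansion vanishing in all positions $\leq q$, so their digitwise sum is a binary expansion of $x$ whose digits in positions $p+1,\dots,q$ are all $0$ — a block of $q-p\geq\ell$ zeroes. If $z<0$, then $N/2^p=x+|z|>0$, whence $N\geq 1$, and I would rewrite
\[
x=\frac{N-1}{2^p}+\Bigl(\frac{1}{2^p}-|z|\Bigr),\qquad \frac{1}{2^p}-|z|=\sum_{i=p+1}^{q}2^{-i}+\delta,
\]
where $\delta:=2^{-q}-|z|\in(0,2^{-q})$. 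Since $\delta$ has a binary expansion vanishing in positions $\leq q$, the number $2^{-p}-|z|$ has a binary expansion with digit $1$ in each of the positions $p+1,\dots,q$ and vanishing in positions $\leq p$; adding the terminating expansion of $(N-1)/2^p$ (disjoint supports again) yields a binary expansion of $x$ with a block of $q-p\geq\ell$ ones. In either case $x$ has a binary expansion containing a string of $\ell$ zeroes or a string of $\ell$ ones, as required.

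The one genuinely delicate point I expect is the digit bookkeeping — being sure that the two sums above really involve no carrying. Exhibiting each of the two summands via a binary expansion whose support is disjoint from that of the other summand bypasses any discussion of carries, and this reduction rests only on two elementary facts: every real number in $[0,2^{-q})$ has a binary expansion vanishing in positions $1,\dots,q$, and $2^{-p}-2^{-q}=\sum_{i=p+1}^{q}2^{-i}$. The remaining verifications — that $q-p\geq\ell$ and $t/2^{k_1-1}\leq 2^{-q}$ (from the choice of $b$ and $q$), and that $N\geq 0$ in the first case and $N\geq 1$ in the second (from $x>0$) — are immediate.
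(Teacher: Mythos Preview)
Your proof is correct and follows essentially the same strategy as the paper: invoke Lemma~\ref{gaps} to find a long gap between consecutive elements $k_0<k_1$ of $nS$, split $x$ into a dyadic ``head'' $y=N/2^{k_0}$ and a small ``tail'' $z$, and read off a constant block of digits in the intermediate positions. The paper packages the last step more tersely via the fractional part, observing that $\mathrm{frac}(2^{k_0}x)\in[0,2^{-\ell})\cup(1-2^{-\ell},1)$ and deducing the digit conclusion directly, whereas you build the binary expansion explicitly by exhibiting summands with disjoint digit supports and casing on the sign of $z$; your additional reductions ($x\ge 0$, $x$ non-dyadic) are not needed in the paper's formulation but make your explicit construction clean.
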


\begin{proof}
Let $x\in A_{n,t}$. Then for every $k\in nS$ there exists $x_k\in [-t,t\,]_\mathbb Z$ such that
$$
x=\sum\limits_{k\in nS} \frac{x_k}{2^k}.
$$
Let $m\in nS$. We observe that
\[2^mx=q_m+r_m\]
where
\[q_m:=\sum\limits_{k\in (nS)_{\leq m}} 2^{m-k}x_k\mbox{ and
 }r_m:=
\sum\limits_{k\in (nS)_{>m}} \frac{x_k}{2^{k-m}}.
\]
Then $q_m\in\intz$ and we have
$$
|r_m|\leq ~t\sum\limits_{j=m'-m}^{\infty} \frac{1}{2^j} =\frac{2t}{2^{m'-m}}
$$
where $m,m'\in nS$ are consecutive elements.

By Lemma \ref{gaps}, there exist consecutive elements $k,k'\in nS$ such that $2^{k'-k}>2^{\ell+1}t$ and so we get $|r_k|<\frac{1}{2^\ell}$.
Hence,
$$
\text{frac}\left(2^kx\right)=
\begin{cases}
r_k, & \mbox{ if } r_k\in \left[\,0,\frac{1}{2^\ell}\right);\\
1+r_k, & \mbox{ if } r_k\in \left(-\frac{1}{2^\ell},0\right);
\end{cases}
$$
and so
\[
\text{frac}(2^kx)\in \left[0,\tfrac{1}{2^\ell}\right)\cup \left(1-\tfrac{1}{2^\ell},1\right),
\]

\bigskip
\noindent
where frac$(y):=z-\lfloor y\rfloor$ is the fractional part of $y\in \mathbb{R}$.
It follows that the digits after the dot in a binary expansion of $x$ from the $(k+1)^\text{th}$ position to the $(k+\ell)^\text{th}$ position are all equal.
\end{proof}

\begin{corollary}
A number $x\in A$ is rational if and only if there exists $a\in \mathbb Z$ and $k\in \mathbb N_0$ such that $x=a/2^k$.
\end{corollary}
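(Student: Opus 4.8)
The plan is to prove the two implications of the ``if and only if'' separately. The ``if'' direction uses only that $A$ is a subring (Theorem \ref{main}): since $1\in A$ and $A$ is a subring we get $\mathbb Z\subseteq A$; since $1/2=1/2^1\in A_1\subseteq A$ (take $t=1$, the digit at $k=1\in S$ equal to $1$, all other digits zero) and $A$ is closed under multiplication, $(1/2)^k=1/2^k\in A$ for every $k\in\mathbb N_0$; hence $a/2^k=a\cdot(1/2)^k\in A$ for all $a\in\mathbb Z$ and $k\in\mathbb N_0$. This part is immediate.

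For the ``only if'' direction I would argue by contradiction: suppose $x\in A$ is rational but not of the form $a/2^k$. Write $x$ in lowest terms, let $2^a$ be the exact power of $2$ dividing its denominator and let $q'>1$ be the odd part of the denominator. Then $y:=2^ax$ again lies in $A$ (as $A$ is a ring) and $y=p/q'$ with $q'$ odd, $q'>1$, $\gcd(p,q')=1$. Since $y$ is not a dyadic rational it has a \emph{unique} binary expansion, so the expansion furnished by Proposition \ref{string} must be this one. The arithmetic input is that for every $j\in\mathbb N_0$ one has $\mathrm{frac}(2^jy)=m_j/q'$, where $m_j$ is the residue of $2^jp$ modulo $q'$, and $1\le m_j\le q'-1$ because $\gcd(2^jp,q')=1$ forces $m_j\ne 0$; hence $\mathrm{frac}(2^jy)\in[\,1/q',\,1-1/q'\,]$ for every $j$.

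Now apply Proposition \ref{string} to $y$ with $\ell$ chosen so that $2^\ell>q'$. The (unique) binary expansion of $y$ then contains $\ell$ consecutive equal digits, say in positions $j+1,\dots,j+\ell$ after the binary point; since the digits of $\mathrm{frac}(2^jy)$ are precisely the digits of $y$ from position $j+1$ onward, this forces $\mathrm{frac}(2^jy)\le 2^{-\ell}$ if those $\ell$ digits are all $0$, and $\mathrm{frac}(2^jy)\ge 1-2^{-\ell}$ if they are all $1$. In the first case $\mathrm{frac}(2^jy)\le 2^{-\ell}<1/q'$, and in the second $\mathrm{frac}(2^jy)\ge 1-2^{-\ell}>1-1/q'$; either way this contradicts $\mathrm{frac}(2^jy)\in[\,1/q',\,1-1/q'\,]$. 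Hence no such $x$ exists, so every rational element of $A$ is a dyadic rational, completing the proof.

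I do not expect a genuine obstacle here. The only points requiring care are the reduction to an odd denominator $q'$ (this is exactly what keeps the multiplication-by-$2$ orbit of the fractional part bounded away from both $0$ and $1$ modulo $1$) and the observation that a non-dyadic real number has a unique binary expansion, so that Proposition \ref{string} genuinely constrains that unique expansion rather than merely some expansion of $y$.
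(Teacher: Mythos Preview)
Your proof is correct and follows essentially the same route as the paper: invoke Proposition~\ref{string} to force arbitrarily long runs of equal binary digits, which a non-dyadic rational cannot have. The paper compresses this into one line (``$A$ only contains those rationals with finite binary expansion''), while you unpack the same idea quantitatively via $\mathrm{frac}(2^jy)\in[1/q',\,1-1/q']$ and also verify the easy ``if'' direction explicitly; the one small point to tidy is that Proposition~\ref{string} as stated does not say the run lies after the binary point, but this is immediate either from its proof or by first replacing $y$ with $\mathrm{frac}(y)\in A$.
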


\begin{proof}
From Proposition \ref{string}, we see that the ring $A$ only contains those rational numbers which have a finite binary expansion. The result follows.
\end{proof}

\begin{remark} The above construction of an uncountable subring of $\mathbb{R}$ of Hausdorff dimension zero goes through if the initial set $S=\{0,1,2,2^2,2^3,\ldots\}$ is replaced by a set $T\cup\{0\}$ where $T$ is a subset of $\natn$ which has the following property:
There exists $b\in \rear$ such that for every $n\in\natn$, the cardinality of the set $\{\log t:t\in T\}\cap [n,n+1)$ is less than $b$.
\end{remark}

\section{Concluding remarks}

In 1966, Erd\H{o}s and Volkmann \cite{EV} showed that for every $d\in [0,1]$ there exists a subgroup of $\mathbb R$ which is Borel measurable and has Hausdorff dimension $d$. (Example 12.4 of \cite{F2}). In an unpublished work (see \cite{F1}, pg. 212), using the Continuum Hypothesis, R. O. Davies showed that for every $d\in [0,1]$ there exists a subring of $\mathbb R$ which has Hausdorff dimension $d$ and which is not a Borel set. In \cite{EM}, it is shown that every proper subring of $\mathbb R$ which is a Borel set, has Hausdorff dimension zero.

It has come to our attention that our construction has some similarities to that of \cite{EV}, but our series expansion of real numbers is inspired by the binary expansion. Erd\H{o}s and Volkmann \cite{EV} noted that the question of the Hausdorff dimension of subrings and subfields remained open. We feel that it is of interest to see how Lemma \ref{representations} is crucially used to show that we get a subring of $\mathbb R$ by using numbers which are represented by special series expansions. Also, we are able to see relatively easily that the set $A$ has Hausdorff dimension zero.

We have been informed of the following construction. If $A$ is a compact Cantor set such that for every $n\in \mathbb N$ the set $A^n$ has Hausdorff dimension zero, then it can be shown that the subring of $\mathbb R$ which is generated by $A$ has Hausdorff dimension zero. However, we are unable to find a reference for the construction of such a set. Our construction is very explicit using series expansions and we use neither the Continuum Hypothesis nor the Axiom of choice. 

\bigskip


\end{document}